\numberwithin{equation}{section}
\newtheorem{theorem}{Theorem}
\newtheorem{lemma}[theorem]{Lemma}
\newtheorem{claim}{Claim}
\renewcommand{\leq}{\leqslant}
\renewcommand{\geq}{\geqslant}
\newcommand{\R}{\mathbb{R}}
\newcommand{\N}{\mathbb{N}}
\begin{document}

\title[On the set of points represented by harmonic subseries]{On the set of points represented by harmonic subseries}

\author[Vjekoslav Kova\v{c}]{Vjekoslav Kova\v{c}}

\address{Department of Mathematics, Faculty of Science, University of Zagreb, Bijeni\v{c}ka cesta 30, 10000 Zagreb, Croatia}

\email{vjekovac@math.hr}


\subjclass[2020]{
Primary
11B75; 
Secondary
11D68, 
40A05} 
\keywords{harmonic series, unit fraction, achievement set, topological game}

\begin{abstract}
We help Alice play a certain ``convergence game'' against Bob and win the prize, which is a constructive solution to a problem by Erd\H{o}s and Graham, posed in their 1980 book on open questions in combinatorial number theory.
Namely, after several reductions using peculiar arithmetic identities, the game outcome shows that the set of points
\[ \Big(\sum_{n\in A}\frac{1}{n}, \sum_{n\in A}\frac{1}{n+1}, \sum_{n\in A}\frac{1}{n+2}\Big), \]
obtained as $A$ ranges over infinite sets of positive integers, has a non-empty interior. This generalizes a two-dimensional result by Erd\H{o}s and Straus.
\end{abstract}

\maketitle



\section{Introduction}
\subsection{The problem}
Paul Erd\H{o}s asked numerous questions on representations of numbers as finite or infinite sums of distinct unit fractions; see the book chapter \cite{Gra13} written by Graham or the problems stated in \cite[\S4,6,7]{EG80}. Many of these problems have been solved over the years and they have motivated the development of new techniques in number theory and combinatorics. 
The problem that we study in this note is distinctly real-valued and higher-dimensional and its solution will use very few ingredients from number theory.

Erd\H{o}s and Graham, in their 1980 monograph on open problems in combinatorial number theory \cite{EG80}, mentioned that
\begin{quote}
\emph{Erd\H{o}s and Straus [unpublished] proved that if one takes all sequences of integers $a_1,a_2,\ldots$, with $\sum_k 1/a_k < \infty$, then the set
\[ \Big\{ (x,y) \,:\, x=\sum_k\frac{1}{a_k},\ y=\sum_k\frac{1}{1+a_k} \Big\} \]
contains a [non-empty] open set;} \cite[p.~65]{EG80}.
\end{quote}
It is also understood from the context that the sequence $(a_k)$ is meant to be positive and strictly increasing. (Indeed, allowing the repetition of $a_k$'s would make this result and the following one significantly less challenging.) 
Erd\H{o}s and Graham then asked \cite[p.~65]{EG80}:
\begin{quote}
\emph{Is the same true in three (or more) dimensions, e.g., taking all $(x,y,z)$ with
\[ x=\sum_k\frac{1}{a_k},\ y=\sum_k\frac{1}{1+a_k},\ z=\sum_k\frac{1}{2+a_k} ? \]}
\end{quote}
We can give the positive answer to this three-dimensional problem, which has also been posed on Thomas Bloom's website \emph{Erd\H{o}s problems} \cite[Problem \#268]{EP}.
Let us state a theorem in the spirit of Bloom's equivalent reformulation of the problem.

\begin{theorem}\label{thm:main}
The set
\begin{equation}\label{eq:theset}
\bigg\{ \Big(\sum_{n\in A}\frac{1}{n}, \sum_{n\in A}\frac{1}{n+1}, \sum_{n\in A}\frac{1}{n+2}\Big) \,:\, A\subset\N \text{ is an infinite set with } \sum_{n\in A}\frac{1}{n}<\infty \bigg\} \subseteq \R^3
\end{equation}
has a non-empty interior.
\end{theorem}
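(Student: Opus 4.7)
The plan is to reduce the statement, by way of a two-player ``convergence game'' as announced in the abstract, to the following quantitative assertion. Fix $\vec v_n:=(1/n,1/(n+1),1/(n+2))$ and a carefully chosen base set $A_0\subset\N$ with $\sum_{n\in A_0}1/n<\infty$; then for every target $\vec t$ in some fixed open box around $\vec s_0:=\sum_{n\in A_0}\vec v_n$, one can inductively choose finite ``correction sets'' $C_1,C_2,\ldots\subset\N$ in disjoint, rapidly increasing windows $[N_k,N_{k+1})$ so that the symmetric difference $A:=A_0\,\triangle\,\bigcup_k C_k$ is infinite, has convergent reciprocal sum, and satisfies $\sum_{n\in A}\vec v_n=\vec t$. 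In the game interpretation, Alice picks each $C_k$, while Bob's adversarial role is to impose the next window threshold $N_{k+1}$, forcing Alice to work at ever-finer scales. A winning strategy for Alice from every $\vec t$ in the box directly produces a witness and yields the open interior.

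The arithmetic heart of the matter is the linear algebra of $\{\vec v_n\}$ at each scale $N$. Taylor-expanding $1/(n+j)=1/n-j/n^2+j^2/n^3-\cdots$ yields
\[
\vec v_n=\tfrac{1}{n}(1,1,1)+\tfrac{1}{n^2}(0,-1,-2)+\tfrac{1}{n^3}(0,1,4)+O(1/n^4),
\]
and the three vectors $(1,1,1)$, $(0,-1,-2)$, $(0,1,4)$ are linearly independent in $\R^3$ (their determinant equals $-2$). Thus, by including or excluding suitable subsets of integers in a window near $N$, Alice should be able to manufacture three ``elementary correction vectors'' of magnitudes roughly $1/N$, $1/N^2$, and $1/N^3$, spanning three independent directions. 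The ``peculiar arithmetic identities'' that the abstract alludes to are presumably what converts these abstract linear-algebra corrections into concrete finite subsets of $\N$: for instance, identities such as $\frac{1}{n(n+1)}=\frac{1}{n}-\frac{1}{n+1}$ and the second-difference relation $\frac{1}{n}-\frac{2}{n+1}+\frac{1}{n+2}=\frac{2}{n(n+1)(n+2)}$ let one swap one finite set for another while holding a prescribed coordinate (or pair of coordinates) invariant, yielding clean axis-aligned corrections. Once such a palette of corrections is available at every scale, a Newton-style fixed-point argument along the window sequence produces the desired $A$.

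The main obstacle I anticipate is the delicate multi-scale bookkeeping. The correction vectors at scale $N$ come in three very different magnitudes ($1/N$, $1/N^2$, $1/N^3$), so the residual error in the \emph{finest} transverse direction must essentially be cleaned up at scale $N$, since at later scales $N'\gg N$ the available fine corrections are weaker still. Simultaneously, each round's adjustment must not disturb the previously fixed coordinates beyond the small error budget reserved for future rounds, and the cumulative set must remain infinite with convergent reciprocal sum. Calibrating the window ratios $N_{k+1}/N_k$ and the nested admissible residual boxes so that Alice has a guaranteed winning move against every legal threshold Bob can pose --- and so that the construction ``closes up'' in the limit --- is precisely the engineering that the game reduction and the identities are designed to make tractable.
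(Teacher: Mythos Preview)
Your outline is in the same spirit as the paper's proof --- linearly change variables to diagonalize the Taylor expansion, manufacture axis-aligned correction moves via arithmetic identities, then run a greedy game --- but there is a genuine gap at the step you defer to ``peculiar arithmetic identities''.

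The identities you cite are the wrong kind. The relations $\frac{1}{n}-\frac{1}{n+1}=\frac{1}{n(n+1)}$ and $\frac{1}{n}-\frac{2}{n+1}+\frac{1}{n+2}=\frac{2}{n(n+1)(n+2)}$ are linear combinations of the three \emph{components} of a single $\vec v_n$; they are precisely what defines the change-of-basis matrix $M$ and nothing more. They do \emph{not} let you ``swap one finite set for another while holding a prescribed coordinate invariant'', because the only operations on $A$ are to include or exclude an integer, and $\vec v_n-\vec v_{n+1}=(1/n^2)(1,1,1)+O(1/n^3)$ is still in the dominant $(1,1,1)$ direction. What is actually needed are \emph{multiplicative} identities: disjoint finite sets $S_j,T_j\subset\N$ such that $\sum_{a\in S_j}1/a^i=\sum_{a\in T_j}1/a^i$ for every $i\in\{1,2,3\}\setminus\{j\}$ but not for $i=j$. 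Then swapping $\{an:a\in T_j\}$ for $\{an:a\in S_j\}$ produces a move $c_j n^{-j}\mathbbm{e}_j+O(n^{-4})$ in the new basis. The paper exhibits such sets by hand (for $j=1$ one needs two sets sharing both their inverse-square \emph{and} inverse-cube sums; the paper uses $S_1=\{45,72,144,160,432,480\}$ and $T_1=\{48,60,120,720,1440,4320\}$). Nothing in your sketch indicates how to find these, and without them your own multi-scale worry is fatal: an $\mathbbm{e}_1$-move of size $1/N$ that leaks $\Theta(1/N^2)$ into $\mathbbm{e}_2$ and $\Theta(1/N^3)$ into $\mathbbm{e}_3$ cannot be repaired at later scales $N'\gg N$, where the available transverse corrections are only $\Theta(1/(N')^2)$ and $\Theta(1/(N')^3)$.

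Once all three axis moves carry only $O(n^{-4})$ error, the paper's organization is also simpler than yours. Bob is not an adversary choosing window thresholds; he is merely the deterministic $O(n^{-4})$ remainder. The three coordinates then decouple completely, and a single cautious rule --- add the $j$-th main term at round $k$ only if the $j$-th residual exceeds $3c_j/n_k^j$ --- wins independently in each coordinate. The sparsification $n_k=k^2 m+1$ (with $m=2\cdot3\cdot5\cdot7\cdot11$, the product of all primes appearing in the $S_j,T_j$) simultaneously ensures that the integers $a n_k$ are all distinct and that the cumulative error tail is dominated by every main term. Your hierarchical windows with Bob-chosen thresholds could perhaps be salvaged, but they reintroduce exactly the inter-coordinate coupling that the arithmetic lemma is designed to eliminate.
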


Here $\N$ denotes the set of all positive integers.
Theorem \ref{thm:main} clearly generalizes the aforementioned unpublished two-dimensional result of Erd\H{o}s and Straus.

\subsection{The approach}
The proof of Theorem \ref{thm:main} will be reduced to a certain infinite strategic game of convergence.
Various topological games have been studied over the years \cite{Ber57,Tel87}, but we do not attempt to formulate or prove any general results in this field.
\emph{Alice} will be our hero who intends to represent every point of a non-empty open subset of $\R^3$ in the form 
\begin{equation}\label{eq:desiredform}
\Big(\sum_{n\in A}\frac{1}{n}, \sum_{n\in A}\frac{1}{n+1}, \sum_{n\in A}\frac{1}{n+2}\Big)
= \sum_{n\in A} \Big( \frac{1}{n}, \frac{1}{n+1}, \frac{1}{n+2} \Big)
\end{equation}
for an appropriate $A\subset\N$.
To relax the limitations coming from the arithmetic of $\N$, we will first interpret this three-dimensional series, after a linear change of variables given by the matrix \eqref{eq:thematrixm} below, as a perturbed series
\begin{equation}\label{eq:mainscheme}
\sum_{n} \bigg( \Big( \frac{1}{n}, \frac{2}{n^2}, \frac{2}{n^3} \Big) + O\Big(\frac{1}{n^4}\Big) \bigg)
\end{equation}
and several further reductions will follow.
Alice will then play a game against \emph{Bob}, who does not need to be Alice's real adversary, but simply plays the role of picking the most annoying choices for perturbation errors. A strategy for winning against Bob is a strategy that takes care of all possibilities for the error terms.
Since the task is not easy, Alice will first pass a training consisting of several easier games in Section \ref{sec:warmup}.
The reader who skips this warm-up section will still find the rigorous self-contained proof in Sections \ref{sec:arithmetic} and \ref{sec:mainproof}, only the motivation will be missing.

\subsection{The literature}
The author is not aware of an existing general result implying Theorem \ref{thm:main}, but there is a lot of literature on similar problems.
The study of the so-called \emph{achievement set}
\[ \Big\{ \sum_{n=1}^{\infty} \epsilon_n v_n \,:\, \epsilon_n\in\{0,1\} \text{ for every }n\in\N \Big\} \]
of a series $\sum_{n=1}^{\infty}v_n$ has been initiated by S\={o}ichi Kakeya \cite{Kak14a,Kak14b} more than a hundred years ago. 
The one-dimensional case, i.e., when $v_n\in(0,\infty)$, has later been resolved completely by Guthrie, Nymann, and S\'{a}enz \cite{GN88,NS00} and it is already non-trivial in the sense that there are four possible topological types of achievement sets; also see the survey \cite{BFP13} and the note \cite{Jon11}.
The same notion makes sense when $v_n$ are vectors in a Banach space, or when the original series $\sum_{n}v_n$ converges only conditionally.
Higher-dimensional theory seems to be mostly concerned with special cases and examples; see for instance \cite{BG15,BGM18,GM18} and references therein.
Fractal properties and dimensions of achievement sets have been studied by Mor\'{a}n \cite{Mor89,Mor94} and they can also be represented as ranges of (atomic) vector-valued measures \cite{LA08}.
It is conceivable that a more general result than Theorem \ref{thm:main} can be shown using less elementary techniques than ours, but we intentionally stick to the original question of Erd\H{o}s, Graham, and Straus, because of its elegant formulation and since the proof below is constructive and we find an explicit ball inside the set \eqref{eq:theset} in Section \ref{sec:concrete}. 

\subsection{Notation}
For two sequences of real numbers, $(x_n)$ and $(y_n)$, we write:
\begin{itemize}
\item $x_n = O(y_n)$ if there exists a constant $C\in(0,\infty)$ such that $|x_n|\leq C|y_n|$ for every $n\in\N$;
\item $x_n = \Omega(y_n)$ if there exists a constant $c\in(0,\infty)$ such that $|x_n|\geq c|y_n|$ for every $n\in\N$, i.e., if $y_n = O(x_n)$;
\item $x_n = \Theta(y_n)$ if both $x_n = O(y_n)$ and $y_n = O(x_n)$ hold.
\end{itemize}
We will also write
\[ (x_{n,1},x_{n,2},\ldots,x_{n,d}) = O(y_n) \]
if $x_{n,j} \in O(y_n)$ holds in each coordinate $1\leq j\leq d$.


\section{Warming up for the real game}
\label{sec:warmup}
Consider this section as a warm-up practice for Alice.
The first two games are rather trivial and devised for a single player only, but then things gradually become more interesting. 

\subsection{Game \#1}
The one-dimensional case of the Erd\H{o}s--Straus result is straightforward but it can still be illuminating.
Suppose that we want to represent a positive number $x$ as a sum of distinct terms of the harmonic series $\sum_{n=1}^{\infty}1/n$, i.e., as
\[ x=\sum_{n\in A}\frac{1}{n} \]
for some set $A\subset\N$.
Alice can achieve this easily by the following greedy algorithm. Set $x_0:=0$ and then define $(x_n)_{n=0}^{\infty}$ recursively as
\[ x_{n+1} := x_n + \begin{cases}
1/n & \quad\text{if } x_n + 1/n \leq x, \\
0 & \quad\text{otherwise}
\end{cases} \]
for every index $n\geq0$.
The sequence $(x_n)$ is increasing, so it converges to some number $x'\leq x$. Clearly $x' = x$, as otherwise we would get a contradiction as soon as the algorithm gets to the terms $1/n<x-x'$ and sufficiently many of these terms will eventually exceed the value $x'$ due to the divergence of the harmonic series.
This limit $x$ is now a (finite or infinite) sum of distinct unit fractions and we are done.

Exactly the same algorithm will represent every positive number $x$ as a sum of distinct unit fractions with odd denominators, i.e., as
\begin{equation}\label{eq:odddenom}
x=\sum_{n\in A}\frac{1}{2n-1} 
\end{equation}
for some $A\subset\N$. Interestingly, it is still open whether the latter greedy algorithm always stops after finite many steps for every rational number $x\in(0,1)$ with an odd denominator, even though it is known that it can be represented as a finite sum \eqref{eq:odddenom}. This is a conjecture by Sherman Stein; see \cite{Gra13}.

\subsection{Game \#2}
If the game is played with the series $\sum_{n=2}^{\infty}1/n^2$ instead, then Alice can use the same algorithm to represent every $x\in[0,\pi^2/6-1]$. Why does this greedy procedure work again?
Telescoping gives
\begin{equation}\label{eq:estik2}
\sum_{k=n+1}^{n+m}\frac{1}{k^2} > \sum_{k=n+1}^{n+m}\frac{1}{k(k+1)} = \sum_{k=n+1}^{n+m} \Big(\frac{1}{k}-\frac{1}{k+1}\Big) = \frac{1}{n+1} - \frac{1}{n+m+1},
\end{equation}
so the series tail $\sum_{k=n+1}^{\infty}1/k^2$ exceeds its term $1/n^2$ for $n\geq2$.
Series with this property go by different names in the literature; for instance Graham \cite{Gra64} would say that they have \emph{smoothly replaceable terms}.
Thus, $x_{n+1}=x_n$ in some step of the algorithm means that $x_n + 1/n^2 > x$, so the distance $x-x_n$ is smaller than $\sum_{k=n+1}^{\infty}1/k^2$ and it will surely be overcome by a sum of the later terms of the series.
Any Dirichlet series $\sum_{n}1/n^p$, for $p>1$, has the same property for sufficiently large indices $n$.

Things are not much different if we perturb the last series into something like
\begin{equation}\label{eq:subseriesugly}
\sum_{n=1}^{\infty} \Big( \frac{1}{n^2} + \frac{100\sin n}{(n^2+n+1)^2} \Big).
\end{equation}
For sufficiently large $n$, say $n\geq100$, the second summand is swiped into a $(1/100)$-th part of the first one, and Alice can use the same greedy algorithm again to represent every $x\in[0,0.0099]$ as a subseries of \eqref{eq:subseriesugly}.
The details are left to the reader.

\subsection{Game \#3}
Alice will need more flexibility to cope with Theorem \ref{thm:main}, such as allowing general perturbations of the series' terms, both those that are and those that are not used in the subseries representation of a particular number $x$.
Writing purely formally, Alice wants to represent every $x$ (from a carefully chosen interval) as
\[ x = \sum_{n} \bigg(\frac{\epsilon_n}{n^2} + O\Big(\frac{1}{n^4}\Big) \bigg), \]
where each $\epsilon_n$ is equal to $0$ or $1$.
Alice chooses the set of \emph{played} indices $n$ beforehand; they need to be sufficient to represent all considered numbers. Then, for a given $x$, Alice chooses which of the played indices $n$ have $\epsilon_n=1$, so that the series term is included in the subseries, and for which of the played indices we have $\epsilon_n=0$, and the term is not included in the subseries that converges to $x$.
The error terms $O(1/n^4)$ are arbitrary and they are present for every played index $n$.

We reformulate this problem rigorously into a two-player game by introducing Alice's opponent Bob, who is trying to ruin the intended limit $x$ by perturbing Alice's terms with $O(1/n^4)$ errors. Bob is unpredictable and Alice needs to cover all possibilities.
The following are the precise game rules.
\begin{itemize}
\item The game has infinitely many rounds $n=0,1,2,\ldots$. 
Before it begins, Alice decides which rounds $n$, corresponding to the terms of the series $\sum_n 1/n^2$, will be \emph{played} and which will be \emph{skipped}, e.g., the even ones, the prime ones, etc. There is no series term for $n=0$, so let us assume that round $0$ is always skipped.
\item The game starts with the \emph{initial value} $x_0=0$ and the \emph{target value} is some $x\in\R$.
\item If the $n$-th round is skipped, then the current value $x_n$ remains unchanged, i.e., we set $x_{n+1}:=x_n$.
\item If the $n$-th round is played, then Alice decides whether to add $1/n^2$ to the current value $x_n$ or not.
Afterwards, Bob further adds any number from the interval $[-C/n^4,C/n^4]$, producing $x_{n+1}$. For simplicity let $C=100$.
\item Alice's goal is to make the sequence $(x_n)$ converge to $x$, while Bob's goal is the opposite. 
\end{itemize}
Can Alice still win for every $x$ from some carefully chosen interval of target values?
The answer is: yes!

Postponing the discussion of the allowed values for $x$, we notice that the same greedy algorithm will not work for Alice anymore. 
Even anticipating Bob's immediate contribution and choosing to include the term $1/n^2$ only if the stronger criterion
\[ x_{n+1} = x_n + \frac{1}{n^2} + \frac{100}{n^4} \leq x \]
is satisfied will not help Alice either. Namely, it is possible that $x_{n+1}$ comes sufficiently close to $x$, so that Bob can keep adding positive error terms to make $x_{n+m}>x$ in some of the rounds played later and Alice will not be able to decrease the sequence terms. 
Then the sequence $(x_n)$ will not converge to $x$ and Bob will win.

Thus, Alice should make the sequence $(x_n)$ approach $x$ more cautiously in order to avoid possible overshoots coming from Bob's moves, either in the current round, or in any of the future rounds.
Luckily, from the $n$-th round on, Bob can only perturb the value by at most
{\allowdisplaybreaks\begin{align} 
\sum_{k=n}^{\infty} \frac{100}{k^4} 
& < \sum_{k=n}^{\infty} \frac{100}{(k-3)(k-2)(k-1)k} \nonumber \\
& = \frac{100}{3} \sum_{k=n}^{\infty} \Big( \frac{1}{(k-3)(k-2)(k-1)} - \frac{1}{(k-2)(k-1)k} \Big) \nonumber \\
& = \frac{100}{3(n-3)(n-2)(n-1)}, \label{eq:estik4}
\end{align}}
which is smaller than $1/n^2$ for, say, $n\geq 100$.
Thus, Alice chooses to play rounds $n\geq 100$ and decides to add the term $1/n^2$ to the current value $x_n$ only if
\[ x_n + \frac{3}{n^2} \leq x \]
and leave it as it is otherwise.
We see that Alice is not using a greedy algorithm and never allows $x_n$ to approach too quickly to its tentative limit $x$.

No matter what Bob ever does, Alice now at least knows that, after each played round $n$:
\begin{align}
\text{Case 1: } & x_n + \frac{3}{n^2} \leq x \implies \Big|x_{n+1} - x_{n} - \frac{1}{n^2}\Big| \leq \frac{100}{n^4}, \label{eq:2dcase1} \\
\text{Case 2: }& x_n + \frac{3}{n^2} > x \implies |x_{n+1} - x_{n}| \leq \frac{100}{n^4}. \label{eq:2dcase2}
\end{align}
In either case, $|x_{n+1}-x_n| \leq 101/n^2$, which easily implies that $(x_n)$ is a Cauchy sequence, 
\[ |x_{n+m}-x_n| \leq 101 \sum_{k=n}^{\infty} \frac{1}{k^2} < \frac{101}{n-1}, \]
so it converges to some number $x'\in\R$.
If both cases occur infinitely many times, then there exist indices $n(1)<n(2)<\cdots$ such that
\[ x_{n(k)} + \frac{3}{n(k)^2} \leq x \]
and indices $n'(1)<n'(2)<\cdots$ such that
\[ x_{n'(k)} + \frac{3}{n'(k)^2} > x \]
for every $k\in\N$.
Letting $k\to\infty$ we obtain
\[ \lim_{k\to\infty} x_{n(k)} \leq x \leq \lim_{k\to\infty} x_{n'(k)}, \]
but the whole sequence $(x_n)$ converges to $x'$. Thus $x'=x$ and Alice wins, but we still need to find values $x$ that guarantee that Cases 1 and 2 happen infinitely often.

As soon as $x\geq 4\cdot 10^{-5}$, we know that Case 1 will occur at least once, since otherwise we would arrive at a contradiction with
\begin{align*} 
x & \stackrel{\eqref{eq:2dcase2}}{<} \frac{3}{10^6} + x_{1000} 
= \frac{3}{10^6} + x_{1000} - x_{100} 
= \frac{3}{10^6} + \sum_{n=100}^{999} (x_{n+1}-x_n) \\
& \stackrel{\eqref{eq:2dcase2}}{\leq} \frac{3}{10^6} + \sum_{n=100}^{999} \frac{100}{n^4} 
\stackrel{\eqref{eq:estik4}}{<} \frac{3}{10^6} + \frac{100}{3\cdot 97\cdot 98\cdot 99} 
< 4\cdot 10^{-5}.
\end{align*}
After each occurrence of Case 1, say at the $n$-th round, we eventually end up in Case 1 again. Otherwise, besides \eqref{eq:2dcase1} we could also write
\begin{equation}\label{eq:2dcase3} 
x_{k} + \frac{3}{k^2} > x,\quad |x_{k+1} - x_{k}| \leq \frac{100}{k^4} 
\end{equation}
for every $k\geq n+1$, so
\begin{align*} 
x_n + \frac{3}{n^2} & \stackrel{\eqref{eq:2dcase1}}{\leq} x \stackrel{\eqref{eq:2dcase3}}{<} x_{2n} + \frac{3}{(2n)^2} 
= x_n + \frac{3}{4n^2} + \sum_{k=n}^{2n-1} (x_{k+1}-x_k) \\
& \stackrel{\eqref{eq:2dcase3}}{\leq} x_n + \frac{3}{4n^2} + \sum_{k=n}^{2n-1} \frac{100}{k^4} 
\stackrel{\eqref{eq:estik4}}{<} x_n + \frac{3}{4n^2} + \frac{100}{3(n-3)(n-2)(n-1)},
\end{align*}
which leads us to a contradiction for any $n\geq100$.
Next, if $x<3\cdot 10^{-4}$, then 
\[ x_{100} + \frac{3}{100^2} = 0 + 3\cdot 10^{-4} > x ,\]
so we will be in Case 2 already for the first played round, which is $n=100$.
After the occurrence of Case 2 at some played round $n$, we eventually end up in Case 2 again, as otherwise, besides \eqref{eq:2dcase2}, we would also have
\begin{equation}\label{eq:2dcase4} 
x_{k} + \frac{3}{k^2} \leq x,\quad \Big|x_{k+1} - x_{k}-\frac{1}{k^2}\Big| \leq \frac{100}{k^4} 
\end{equation}
for every $k\geq n+1$.
A contradiction is now obtained for any $n\geq100$ from
{\allowdisplaybreaks\begin{align*} 
x_n + \frac{3}{n^2} & \stackrel{\eqref{eq:2dcase2}}{>} x \stackrel{\eqref{eq:2dcase4}}{\geq} x_{2n} + \frac{3}{(2n)^2} 
= x_n + \frac{3}{4n^2} + \sum_{k=n}^{2n-1} (x_{k+1}-x_k) \\
& \stackrel{\eqref{eq:2dcase2},\eqref{eq:2dcase4}}{\geq} x_n + \frac{3}{4n^2} - \frac{100}{n^4} + \sum_{k=n+1}^{2n-1} \Big(\frac{1}{k^2}-\frac{100}{k^4}\Big) \\
& \stackrel{\eqref{eq:estik2},\eqref{eq:estik4}}{>} x_n + \frac{3}{4n^2} + \frac{1}{n+1} - \frac{1}{2n} - \frac{100}{3(n-3)(n-2)(n-1)}.
\end{align*}}
Therefore, Alice has the strategy to win the game for every $x\in[4\cdot 10^{-5},3\cdot 10^{-4})$.

We will refer to this game as \emph{playing with the series} denoted schematically as
\[ \sum_{n} \bigg(\frac{1}{n^2} + O\Big(\frac{1}{n^4}\Big) \bigg). \]
Other perturbed series have analogous games associated with them.

\subsection{Game \#4}
What will Alice do if Bob becomes more hostile and starts perturbing the terms by $O(1/n^3)$ instead, i.e., when they are playing with the series
\[ \sum_{n} \bigg(\frac{1}{n^2} + O\Big(\frac{1}{n^3}\Big) \bigg)? \]
Nothing easier than that!
Alice simply sparsifies the series, choosing to play only the rounds $n=k^2$ for $k=1,2,3,\ldots$. That way the game is effectively played with the series $\sum_k 1/k^4$, while Bob is perturbing the terms with the errors that are only $O(1/k^6)$. The tail $\sum_{l=k}^{\infty} C/l^6$ is now easily dominated by $C/k^5$, when it gets swallowed inside the main term $1/k^4$.

Various other sparsfications are possible, such as:
\begin{itemize}
\item choosing only the odd terms by substituting $n=2k-1$,
\item choosing some terms that are not divisible by $2$, $3$, or $5$ by substituting $n=30k+1$, etc.
\end{itemize}
All of them lead to minor modifications of game \#3.

\subsection{Game \#5}
Now consider the following two-dimensional series,
\begin{equation}\label{eq:seriesugly2}
\sum_{n=1}^{\infty} \Big( \frac{1-(-1)^n}{2n^2} + \frac{100\sin n}{(n^2+n+1)^2}, \frac{1+(-1)^n}{2n^2} + \frac{10\cos(n^2)}{2n^4-1} \Big).
\end{equation}
Let us ask Alice to represent every point $(x,y)$ from a non-degenerate disk in $\R^2$ as a sum of a subseries of \eqref{eq:seriesugly2}.
Lower order terms can be understood as inconvenient perturbations and \eqref{eq:seriesugly2} is just a particular instance of the perturbed series
\begin{equation}\label{eq:seriesugly3}
\sum_{n=1}^{\infty} \bigg( \Big( \frac{1-(-1)^n}{2n^2}, \frac{1+(-1)^n}{2n^2} \Big) + O\Big(\frac{1}{n^4}\Big) \bigg).
\end{equation}
Again, the game between Alice and Bob is played, starting from $(x_0,y_0)=(0,0)$ and producing the point $(x_{n+1},y_{n+1})$ from $(x_n,y_n)$ in its $n$-th round. Its rules are analogous to the rules of game \#3 and Alice's goal is to make this sequence of points converge to $(x,y)$.
It is natural to split \eqref{eq:seriesugly3} as
\[ \sum_{n\text{ odd}} \bigg( \Big( \frac{1}{n^2}, 0 \Big) + O\Big(\frac{1}{n^4}\Big) \bigg)
+ \sum_{n\text{ even}} \bigg( \Big( 0, \frac{1}{n^2} \Big) + O\Big(\frac{1}{n^4}\Big) \bigg). \]
Alice can now use the odd terms to ``move'' horizontally towards $x$ by simply playing with the series 
\[ \sum_{k} \bigg( \frac{1}{(2k-1)^2} + O\Big(\frac{1}{k^4}\Big) \bigg) \]
and, simultaneously, the even terms to ``move'' vertically towards $y$ by playing with 
\[ \sum_{k} \bigg( \frac{1}{(2k)^2} + O\Big(\frac{1}{k^4}\Big) \bigg). \]
That way the game splits into two one-dimensional games, played independently in each coordinate, except that the error terms mutually interact.
However, these error terms can be viewed as a single perturbation written as $O(1/k^4)$.
Our more flexible rules of game \#3 finally pay off. 

\subsection{Game \#6}
Alice is ready to prove the first nontrivial result. Let us tackle the unpublished two-dimensional result of Erd\H{o}s and Straus, mentioned in the introduction.
Since $(x,y)\mapsto (x,x-y)$ is an invertible linear transformation and
\[ \frac{1}{n} - \frac{1}{n+1} = \frac{1}{n(n+1)}, \]
we actually need to show that 
\[ \bigg\{ \Big(\sum_{n\in A}\frac{1}{n}, \sum_{n\in A}\frac{1}{n(n+1)} \Big) \in\R^2 \,:\, A\subset\N \text{ is an infinite set with } \sum_{n\in A}\frac{1}{n}<\infty \bigg\} \]
has a non-empty interior.
Also,
\[ \frac{1}{n(n+1)} = \frac{1}{n^2}\frac{1}{1+1/n} = \frac{1}{n^2} \bigg( 1 + O\Big(\frac{1}{n}\Big) \bigg), \] 
so Alice actually has to represent points (from some non-degenerate disk) as subseries' sums of the two-dimensional perturbed series
\[ \sum_n \bigg( \Big( \frac{1}{n}, \frac{1}{n^2} \Big) + O\Big(\frac{1}{n^3}\Big) \bigg). \]
The problem here, unlike with game \#5, is that the main term $(1/n,1/n^2)$ does not allow us to move along the vertical axis without producing a large perturbation error in the first coordinate, namely $O(1/n)$.
It is time that Alice learns one last trick!

Our hero could also consider differences of the series' terms, hoping to achieve a similar dynamics as in game \#5, but starting from a point $(x_0,y_0)$, which is no longer the origin $(0,0)$. Let us give a concrete example.
Dividing the Pythagorean identity $4^2 + 3^2 = 5^2$ by $60^2$, we obtain
\[ \frac{1}{15^2} + \frac{1}{20^2} = \frac{1}{12^2}. \]
Thus
\[ \underbrace{\Big( \frac{1}{15n}, \frac{1}{(15n)^2} \Big) + \Big( \frac{1}{20n}, \frac{1}{(20n)^2} \Big)}_{\text{add terms}} - \underbrace{\Big( \frac{1}{12n}, \frac{1}{(12n)^2} \Big)}_{\text{remove term}} 
= \Big( \frac{1}{30n}, 0 \Big) \]
and Alice can use this difference to ``move to the right'' by $1/(30n)$.
Similarly, starting with 
\[ \frac{1}{2} = \frac{1}{3} + \frac{1}{6}, \]
we easily get
\[ \underbrace{\Big( \frac{1}{2n}, \frac{1}{(2n)^2} \Big)}_{\text{add term}} - \underbrace{\Big( \frac{1}{3n}, \frac{1}{(3n)^2} \Big) - \Big( \frac{1}{6n}, \frac{1}{(6n)^2} \Big)}_{\text{remove terms}} 
= \Big( 0, \frac{1}{9n^2} \Big), \]
so Alice can also ``move up'' by $1/(9n^2)$.
Purelly formally, it would be desirable to start the game with
\begin{equation}\label{eq:startx0y0}
(x_0,y_0) = \sum_{n} \bigg( \Big( \frac{1}{3n}, \frac{1}{3n(3n+1)} \Big) + \Big( \frac{1}{6n}, \frac{1}{6n(6n+1)} \Big) + \Big( \frac{1}{12n}, \frac{1}{12n(12n+1)} \Big) \bigg),
\end{equation}
i.e., initially include the terms that Alice might decide to remove, and then represent, after Alice's win,
\begin{align*}
(x,y) & = (x_0,y_0) \\
& + \sum_{n} \epsilon_n \bigg( \Big( \frac{1}{15n}, \frac{1}{15n(15n+1)} \Big) + \Big( \frac{1}{20n}, \frac{1}{20n(20n+1)} \Big) - \Big( \frac{1}{12n}, \frac{1}{12n(12n+1)} \Big) \bigg) \\
& + \sum_{n} \epsilon'_n \bigg( \Big( \frac{1}{2n}, \frac{1}{2n(2n+1)} \Big) - \Big( \frac{1}{3n}, \frac{1}{3n(3n+1)} \Big) - \Big( \frac{1}{6n}, \frac{1}{6n(6n+1)} \Big) \bigg)
\end{align*}
for some $\epsilon_n,\epsilon'_n\in\{0,1\}$, which would then give
{\allowdisplaybreaks\begin{align*}
(x,y) = \sum_{n} \bigg( & \epsilon'_n \Big( \frac{1}{2n}, \frac{1}{2n(2n+1)} \Big) + (1-\epsilon'_n) \Big( \frac{1}{3n}, \frac{1}{3n(3n+1)} \Big) \\
& + (1-\epsilon'_n) \Big( \frac{1}{6n}, \frac{1}{6n(6n+1)} \Big)  + (1-\epsilon_n) \Big( \frac{1}{12n}, \frac{1}{12n(12n+1)} \Big) \\
& + \epsilon_n \Big( \frac{1}{15n}, \frac{1}{15n(15n+1)} \Big) + \epsilon_n \Big( \frac{1}{20n}, \frac{1}{20n(20n+1)} \Big) \bigg),
\end{align*}}
but there are two complications.
First, the series \eqref{eq:startx0y0} does not even converge.
Second, the numbers $2n,3n,6n,12n,15n,20n$ can repeat as $n$ varies, while a subseries should pick each term at most once.
Both of these difficulties are easily resolved by substituting $n=30k^2+1$ for $k=1,2,\ldots$.
For the purpose of game \#6 it is also convenient to allow Alice to permute the rounds arbitrarily, which is ok as long as we are dealing with absolutely convergent series.

The reader could now fill in the details to obtain a full proof of the Erd\H{o}s--Straus result. We do not do that here, since with just a few more technical details we can also give a complete proof of the three-dimensional result stated in Theorem \ref{thm:main}.


\section{An arithmetic lemma}
\label{sec:arithmetic}
We now make a final preparation for the proof of the main theorem.
The arithmetic of $\N$ will play a minimal role and it is encoded in the following auxiliary result, which does a similar initial reduction as the one we used in game \#6.

\begin{lemma}\label{lm:arithmetic}
There exist a matrix $M\in\textup{GL}(3,\R)$, mutually disjoint finite sets $S_1$, $S_2$, $S_3$, $T_1$, $T_2$, $T_3\subset\N$, and constants $c_1,c_2,c_3\in(0,\infty)$ such that
\begin{equation}\label{eq:matrix}
\Big( \sum_{a\in S_j} - \sum_{a\in T_j} \Big) M \begin{pmatrix} 1/(a n) \\ 1/(a n + 1) \\ 1/(a n + 2) \end{pmatrix}
= \frac{c_j}{n^j}\mathbbm{e}_j + O\Big(\frac{1}{n^{4}}\Big)
\end{equation}
for $1\leq j\leq 3$.
\end{lemma}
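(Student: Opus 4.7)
My plan is to reduce \eqref{eq:matrix}, via an appropriate matrix $M$, to three separate moment-cancellation conditions on pairs of disjoint subsets of $\N$, and then to construct those sets using scaled copies of a few elementary unit-fraction identities. Expanding $1/(an+k) = 1/(an) - k/(an)^2 + k^2/(an)^3 + O(n^{-4})$ for $k = 0, 1, 2$, the column vector $v_a(n) := (1/(an), 1/(an+1), 1/(an+2))^T$ has leading terms
\[ v_a(n) = \frac{1}{an}\begin{pmatrix}1\\1\\1\end{pmatrix} + \frac{1}{(an)^2}\begin{pmatrix}0\\-1\\-2\end{pmatrix} + \frac{1}{(an)^3}\begin{pmatrix}0\\1\\4\end{pmatrix} + O(n^{-4}). \]
The three coefficient vectors have determinant $-2$, so there is a unique $M \in \textup{GL}(3,\R)$ sending them to $\mathbbm{e}_1$, $2\mathbbm{e}_2$, $2\mathbbm{e}_3$ respectively; then $M v_a(n) = (an)^{-1}\mathbbm{e}_1 + 2(an)^{-2}\mathbbm{e}_2 + 2(an)^{-3}\mathbbm{e}_3 + O(n^{-4})$, matching \eqref{eq:mainscheme}. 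Substituting into \eqref{eq:matrix} and matching coefficients of $1/n$, $1/n^2$, $1/n^3$ separately, the claim becomes: for each $j \in \{1,2,3\}$, the moment-differences $R_k^{(j)} := \sum_{a\in S_j} a^{-k} - \sum_{a\in T_j} a^{-k}$ should vanish for $k \ne j$ and be positive for $k = j$.

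To construct the sets, I rely on two small building blocks from game~\#6. The pair $(\{15d, 20d\}, \{12d\})$ coming from the Pythagorean identity $1/15^2 + 1/20^2 = 1/12^2$ has $(R_1, R_2, R_3) = (1/(30d),\, 0,\, -17/(108000 d^3))$, making $R_2 = 0$ automatic; the pair $(\{2d\}, \{3d, 6d\})$ coming from $1/2 = 1/3 + 1/6$ has $(0,\, 1/(9d^2),\, 1/(12d^3))$, making $R_1 = 0$ automatic. For $j = 3$, I take Pythagorean pairs at scalings $d \in \{1,2,3,6\}$ with signs $(-,+,+,+)$; the identity $1 = 1/2 + 1/3 + 1/6$ cancels the $R_1$-contributions exactly, $R_2$ stays zero, and $R_3$ survives as a positive quantity. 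For $j = 2$, I combine scaled copies of $1/2 = 1/3 + 1/6$ with scaled copies of a second $R_1=0$ identity such as $1/2 = 1/3 + 1/7 + 1/42$ (whose $R_3/R_2$ ratio differs from that of the first) and pick signs and scalings that cancel $R_3$ without killing $R_2$. For $j = 1$, I start from a single seed element $\{a_0\}$ contributing $(1/a_0, 1/a_0^2, 1/a_0^3)$, and cancel its $R_2$- and $R_3$-parts by attaching further $R_1=0$ blocks of both types on fresh scalings. All six sets are kept mutually disjoint by always choosing the new scalings to be distinct primes larger than any prime appearing at earlier stages.

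The main obstacle is the $j = 1$ step, since my natural building blocks all satisfy $R_1 = 0$ and therefore cannot reach the $(+,0,0)$ direction on their own. The trick is to use the seed $\{a_0\}$ (with $a_0$ chosen large) to supply the $R_1$-contribution and then engineer exact cancellations of the form $\sum \pm 1/d^2 = 9/a_0^2$, together with an analogous equation for $R_3$; these are realizable as signed subsums of reciprocals since the tail series $\sum 1/d^2$ and $\sum 1/d^3$ have the smoothly-replaceable-tails property exploited in game~\#2. With the scalings chosen disjoint from every index already used, the whole construction is finite, and plugging the six sets and the matrix $M$ back into \eqref{eq:matrix} completes the verification.
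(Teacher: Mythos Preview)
Your reduction is the same as the paper's: the matrix $M$ you derive is exactly \eqref{eq:thematrixm}, and the resulting moment conditions $R_k^{(j)}=0$ for $k\ne j$, $R_j^{(j)}>0$ are precisely what the paper is after. Your $j=3$ construction is also fine: the four Pythagorean blocks at $d\in\{1,2,3,6\}$ with signs $(-,+,+,+)$ give $R_1=R_2=0$ and $R_3>0$ with twelve distinct elements, so this case is complete (up to multiplying everything by a fresh prime to separate it from the other $S_i,T_i$, as the paper does with the factors $7$ and $11$).

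The gap is in $j=1$ and $j=2$. There you only \emph{describe} a strategy and never produce the sets; and the justification you give --- that the required cancellations ``are realizable as signed subsums of reciprocals since the tail series $\sum 1/d^2$ and $\sum 1/d^3$ have the smoothly-replaceable-tails property exploited in game~\#2'' --- does not do what you need. Game~\#2 says that every real number in an interval is an \emph{infinite} $\{0,1\}$-subseries of $\sum 1/d^2$; it says nothing about hitting a prescribed rational exactly with a \emph{finite} signed sum, which is what the lemma demands (``mutually disjoint finite sets''). Concretely, for $j=2$ your two block types have $A_3=1/12$ and $B_3=25/294$, so cancelling $R_3$ requires $49\sum\epsilon_i/d_i^3 + 50\sum\epsilon'_j/e_j^3 = 0$; you have not shown how to solve this with finitely many scalings, let alone while keeping $R_2>0$ and all elements distinct. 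For $j=1$ the situation is worse: you must simultaneously solve two such equations (for $R_2$ and $R_3$), and your stated plan of using ever-larger fresh prime scalings would, if carried out via a game-\#2-style infinite process, produce sets with infinitely many prime factors --- so even the relaxed form of the lemma noted at the end of Section~\ref{sec:arithmetic} would not apply, and the product $m$ in Section~\ref{sec:mainproof} would be undefined. The paper sidesteps all of this by simply exhibiting explicit finite identities (verified by computer) for each $j$; you need to do the same, or else supply a genuine finiteness argument that game~\#2 does not provide.
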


Here $\mathbbm{e}_1,\mathbbm{e}_2,\mathbbm{e}_3$ denotes the standard basis of $\R^3$.
From now on, it will be more convenient to write points and vectors of $\R^3$ as $3\times 1$ columns.

\begin{proof}[Proof of Lemma \ref{lm:arithmetic}]
Property \eqref{eq:matrix} relies on the expansions
\[ \frac{1}{a n + k - 1} = \frac{1}{a n} - \frac{k-1}{a^2 n^2} + \frac{(k-1)^2}{a^3 n^3} + O\Big(\frac{1}{n^4}\Big), \]
for $1\leq k\leq 3$, where the error terms are allowed to depend on a parameter $a\in\N$. 
If we choose
\begin{equation}\label{eq:thematrixm}
M = \begin{pmatrix} 1 & 0 & 0 \\ 3 & -4 & 1 \\ 1 & -2 & 1 \end{pmatrix}, 
\end{equation}
then
\[ M \begin{pmatrix} \frac{1}{a n} \\[1mm] \frac{1}{a n + 1} \\[1mm] \frac{1}{a n + 2} \end{pmatrix}
= \begin{pmatrix} \frac{1}{a n} + O(\frac{1}{n^4}) \\[1mm] \frac{2}{a^2 n^2} + O(\frac{1}{n^4}) \\[1mm] \frac{2}{a^3 n^3} + O(\frac{1}{n^4}) \end{pmatrix}. \]
Thus, to construct (say) $S_3$ and $T_3$, we need to make sure that
\[ \sum_{a\in S_3} \frac{1}{a} = \sum_{a\in T_3} \frac{1}{a},\qquad \sum_{a\in S_3} \frac{1}{a^2} = \sum_{a\in T_3} \frac{1}{a^2},\qquad \sum_{a\in S_3} \frac{1}{a^3} \neq \sum_{a\in T_3} \frac{1}{a^3}. \]
The only remaining ingredients of the proof are then elementary identities
{\allowdisplaybreaks\begin{align*}
\frac{1}{10}+\frac{1}{30}+\frac{1}{60} & = \frac{1}{12}+\frac{1}{15}, \\
\frac{1}{10^2}+\frac{1}{30^2}+\frac{1}{60^2} & = \frac{1}{12^2}+\frac{1}{15^2}, \\
\frac{1}{16}+\frac{1}{20}+\frac{1}{240} & = \frac{1}{15}+\frac{1}{24}+\frac{1}{120}, \\
\frac{1}{16^3}+\frac{1}{20^3}+\frac{1}{240^3} & = \frac{1}{15^3}+\frac{1}{24^3}+\frac{1}{120^3}, \\
\frac{1}{45^2}+\frac{1}{72^2}+\frac{1}{144^2}+\frac{1}{160^2}+\frac{1}{432^2}+\frac{1}{480^2} & =
\frac{1}{48^2}+\frac{1}{60^2}+\frac{1}{120^2}+\frac{1}{720^2}+\frac{1}{1440^2}+\frac{1}{4320^2}, \\
\frac{1}{45^3}+\frac{1}{72^3}+\frac{1}{144^3}+\frac{1}{160^3}+\frac{1}{432^3}+\frac{1}{480^3} & = 
\frac{1}{48^3}+\frac{1}{60^3}+\frac{1}{120^3}+\frac{1}{720^3}+\frac{1}{1440^3}+\frac{1}{4320^3}. 
\end{align*}}
Straightforward (preferably computer assisted) computation verifies these identities and thus also property \eqref{eq:matrix} with
{\allowdisplaybreaks\begin{align*}
S_1 = \{45, 72, 144, 160, 432, 480\}, & \qquad T_1 = \{48, 60, 120, 720, 1440, 4320\}, \\
S_2 = 11\cdot \{16, 20, 240\}, & \qquad T_2 = 11\cdot \{15, 24, 120\}, \\
S_3 = 7\cdot \{10, 30, 60\}, & \qquad T_3 = 7\cdot \{12, 15\}
\end{align*}}
and
{\allowdisplaybreaks\begin{align*}
c_1 & = \frac{1}{45}+\frac{1}{72}+\frac{1}{144}+\frac{1}{160}+\frac{1}{432}+\frac{1}{480} 
-\frac{1}{48}-\frac{1}{60}-\frac{1}{120}-\frac{1}{720}-\frac{1}{1440}-\frac{1}{4320} = \frac{1}{180}, \\ 
c_2 & = \frac{2}{11^2} \Big( \frac{1}{16^2}+\frac{1}{20^2}+\frac{1}{240^2} -\frac{1}{15^2}-\frac{1}{24^2}-\frac{1}{120^2} \Big) = \frac{1}{348480}, \\
c_3 & = \frac{2}{7^3} \Big( \frac{1}{10^3}+\frac{1}{30^3}+\frac{1}{60^3} -\frac{1}{12^3}-\frac{1}{15^3} \Big) = \frac{1}{1029000}. 
\end{align*}}
The author used Mathematica \cite{Mathematica} for this purpose.
Note the factors $7$ and $11$, the purpose of which is simply to make all numbers in $S_j,T_j$ mutually different.
The matrix $M$ is regular since $\det M=-2$.
\end{proof}

Our application of Lemma \ref{lm:arithmetic} in the proof below does not use that the sets $S_j$ and $T_j$ are finite: they could merely have finitely many prime factors.


\section{Proof of Theorem \ref{thm:main}}
\label{sec:mainproof}
Let $m$ be the product of all prime factors of the numbers in the set 
\[ U := S_1\cup S_2\cup S_3\cup T_1\cup T_2\cup T_3. \]
The proof of Lemma \ref{lm:arithmetic} actually gives 
\[ m = 2\cdot 3\cdot 5\cdot 7\cdot 11 = 2310. \] 
Every positive integer has at most one representation as $a(k^2 m+1)$ for $a\in U$ and $k\in\N$.
The series
\[ \sum_{\substack{a\in U\\k\in\N}} \frac{1}{a(k^2 m+1)},\quad \sum_{\substack{a\in U\\k\in\N}} \frac{1}{a(k^2 m+1)+1},\quad \sum_{\substack{a\in U\\k\in\N}} \frac{1}{a(k^2 m+1)+2} \]
converge, so even after multiplying their terms arbitrarily by $-1$, $0$, or $1$, we obtain absolutely convergent series, the terms of which can be freely permuted and grouped.
Let $C\in(0,\infty)$ be a constant such that each component of the vector
\begin{equation}\label{eq:vecexpdiff} 
\Big( \sum_{a\in S_j} - \sum_{a\in T_j} \Big) M \begin{pmatrix} 1/(a n) \\ 1/(a n + 1) \\ 1/(a n + 2) \end{pmatrix}
- \frac{c_j}{n^j}\mathbbm{e}_j 
\end{equation}
is at most $C/n^4$ in the absolute value, for every $n\in\N$ and every $1\leq j\leq 3$; it exists by Lemma \ref{lm:arithmetic}.
Next, let $K$ be a sufficiently large positive integer such that
{\allowdisplaybreaks\begin{align}
\underbrace{\sum_{l=k}^{\infty} \frac{3C}{(l^2 m+1)^4}}_{\Theta(k^{-7})} & < \underbrace{\frac{c_j}{(k^2 m+1)^j}}_{\Omega(k^{-6})}, \label{eq:propK1} \\
\underbrace{\sum_{l=k+1}^{\infty} \frac{c_j}{(l^2 m+1)^j}}_{\Theta(k^{-2j+1})} & > \underbrace{\frac{4c_j}{(k^2 m+1)^j}}_{\Theta(k^{-2j})} \label{eq:propK2}
\end{align}}
both hold for every $k\geq K$ and $1\leq j\leq 3$.

Let the real game begin!
Alice and Bob will start with the initial point
\[ p = \begin{pmatrix} p_1 \\ p_2 \\ p_3 \end{pmatrix} := \sum_{l=K}^{\infty} \sum_{j=1}^{3} \sum_{a\in T_j} M \begin{pmatrix} \frac{1}{a(l^2 m+1)} \\[1mm] \frac{1}{a(l^2 m+1)+1} \\[1mm] \frac{1}{a(l^2 m+1)+2} \end{pmatrix} \in \R^3 . \]
They are playing the game schematically represented by the perturbed series \eqref{eq:mainscheme}.
Alice chooses to play the rounds $n=a(k^2 m+1)$ for $a\in U$ and $k\geq K$, but the order of rounds is permuted and arranged according to $k$ only.
In each step Alice can increase the current point in some of the coordinates $j=1,2,3$ respectively by $c_j/(k^2 m+1)^j$, while Bob can perturb each of the coordinates at most by $C/(k^2 m+1)^4$ in the absolute value.

Alice claims a winning strategy for every target point
\begin{equation}\label{eq:choiceq}
q = \begin{pmatrix} q_1 \\ q_2 \\ q_3 \end{pmatrix} \in \mathcal{Q} := p + \prod_{j=1}^{3} \Big[\frac{c_j}{(K^2 m+1)^j}, \frac{2c_j}{(K^2 m+1)^j}\Big] 
\end{equation}
and will construct a sequence of points $(x_k)_{k=K}^{\infty}$,
\[ x_k = \begin{pmatrix} x_{k,1} \\ x_{k,2} \\ x_{k,3} \end{pmatrix} \in \R^3, \]
that converges to the point $q$ as follows.
First set $x_K := p$ and then define recursively
\[ x_{k+1} := x_k + \sum_{j=1}^{3} \epsilon_{k,j} \Big( \sum_{a\in S_j} - \sum_{a\in T_j} \Big) M \begin{pmatrix} \frac{1}{a(k^2 m+1)} \\[1mm] \frac{1}{a(k^2 m+1)+1} \\[1mm] \frac{1}{a(k^2 m+1)+2} \end{pmatrix} \]
for every $k\geq K$, where the coefficients $\epsilon_{k,j}\in\{0,1\}$ are determined from
\begin{align*}
\epsilon_{k,j} = 1 & \quad\text{if } x_{k,j} + \frac{3c_j}{(k^2 m+1)^j} \leq q_j, \\
\epsilon_{k,j} = 0 & \quad\text{otherwise}
\end{align*}
for $1\leq j\leq 3$.
From this recurrence relation and the definition of the constant $C$, we have
\begin{align}
\text{if } \epsilon_{k,j} = 1, & \quad\text{then } \Big| x_{k+1,j} - x_{k,j} - \frac{c_j}{(k^2 m+1)^j} \Big| \leq \frac{3C}{(k^2 m+1)^4}, \label{eq:rec1} \\
\text{if } \epsilon_{k,j} = 0, & \quad\text{then } \big| x_{k+1,j} - x_{k,j} \big| \leq \frac{3C}{(k^2 m+1)^4}. \label{eq:rec2}
\end{align}
We need two auxiliary claims in the proof that $(x_k)_{k=K}^{\infty}$ really converges to $q$, 
but a patient reader will be glad to notice that the whole proof is not more difficult than our discussion of game \#3.

\begin{claim}\label{cl:claim1}
For each $1\leq j\leq3$ there exist infinitely many $k\geq K$ such that $\epsilon_{k,j}=1$.
\end{claim}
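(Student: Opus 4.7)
The plan is to argue by contradiction: suppose that for some fixed $j\in\{1,2,3\}$ only finitely many indices $k\geq K$ satisfy $\epsilon_{k,j}=1$, and let $k_0\geq K$ be the smallest index beyond which we always have $\epsilon_{k,j}=0$. Then either $k_0=K$, or $k_0>K$ and by the minimality of $k_0$ we have $\epsilon_{k_0-1,j}=1$.

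Since $\epsilon_{k,j}=0$ for every $k\geq k_0$, the bound \eqref{eq:rec2} shows that $(x_{k,j})_{k\geq k_0}$ is Cauchy with total variation controlled by $\sum_{l=k_0}^{\infty} 3C/(l^2m+1)^4$, which is strictly smaller than $c_j/(k_0^2m+1)^j$ by \eqref{eq:propK1}. Therefore $x^{*}_j:=\lim_{k\to\infty} x_{k,j}$ exists and satisfies $x^{*}_j<x_{k_0,j}+c_j/(k_0^2m+1)^j$. On the other hand, the decision rule for $\epsilon_{k,j}=0$ forces $x_{k,j}>q_j-3c_j/(k^2m+1)^j$ for every $k\geq k_0$, and passing to the limit gives $x^{*}_j\geq q_j$. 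Combining the two inequalities yields the lower bound
\[ x_{k_0,j} \;>\; q_j - \frac{c_j}{(k_0^2m+1)^j}. \]

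The contradiction emerges from a matching upper bound on $x_{k_0,j}$. If $k_0=K$, then $x_{K,j}=p_j$ and the lower bound reads $p_j>q_j-c_j/(K^2m+1)^j$, in direct conflict with the left endpoint in \eqref{eq:choiceq}, which guarantees $q_j\geq p_j+c_j/(K^2m+1)^j$. If instead $k_0>K$, then $\epsilon_{k_0-1,j}=1$ forces $x_{k_0-1,j}\leq q_j-3c_j/((k_0-1)^2m+1)^j$; plugging this into \eqref{eq:rec1} and absorbing the $3C/((k_0-1)^2m+1)^4$ error using the single-term instance of \eqref{eq:propK1} gives
\[ x_{k_0,j} \;<\; q_j - \frac{c_j}{((k_0-1)^2m+1)^j} \;<\; q_j - \frac{c_j}{(k_0^2m+1)^j}, \]
again contradicting the lower bound just obtained.

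The main bookkeeping point is tracking the slack factor of $3$ in the threshold $3c_j/(k^2m+1)^j$: it is calibrated so that one played round at index $k_0-1$ opens a gap of $2c_j/((k_0-1)^2m+1)^j$ between $x_{k_0,j}$ and $q_j$, out of which Bob's entire remaining perturbation budget (including the $3C/((k_0-1)^2m+1)^4$ contribution from round $k_0-1$ itself) consumes strictly less than $c_j/((k_0-1)^2m+1)^j$ by virtue of \eqref{eq:propK1}. Morally the argument is the same two-case analysis (``eventually return to Case 1'') carried out in game \#3 of the warm-up, applied independently to each coordinate of the three-dimensional game.
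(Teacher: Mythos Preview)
Your proof is correct and follows essentially the same contradiction strategy as the paper. The only difference is organizational: you pivot on a single index $k_0$, first deriving the universal lower bound $x_{k_0,j}>q_j-c_j/(k_0^2m+1)^j$ from the tail of zeros and then obtaining case-dependent upper bounds, whereas the paper treats the ``all zeros'' and ``last one'' cases with separate upper/lower pairings; the underlying estimates (use of \eqref{eq:rec1}, \eqref{eq:rec2}, \eqref{eq:propK1}, and \eqref{eq:choiceq}) are identical.
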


\begin{proof}[Proof of Claim \ref{cl:claim1}]
We can immediately rule out the possibility $\epsilon_{k,j}=0$ for each $k\geq K$, as this would imply
\begin{equation}\label{eq:aux31}
x_{N,j} + \frac{3c_j}{(N^2 m+1)^j} > q_j \quad\text{for } N\geq K 
\end{equation}
and
\[ x_{k+1,j} \stackrel{\eqref{eq:rec2}}{\leq} x_{k,j} + \frac{3C}{(k^2 m+1)^4} \quad\text{for } k\geq K, \]
so that
\begin{equation}\label{eq:aux32}
x_{N,j} \leq p_j + \sum_{k=K}^{N-1}\frac{3C}{(k^2 m+1)^4} 
\end{equation}
for $N>K$.
A combination of \eqref{eq:aux31} and \eqref{eq:aux32} then yields
\[ q_j \leq p_j + \sum_{k=K}^{N-1}\frac{3C}{(k^2 m+1)^4} + \frac{3c_j}{(N^2 m+1)^j} \]
and letting $N\to\infty$ we conclude
\[ q_j \leq p_j + \sum_{k=K}^{\infty}\frac{3C}{(k^2 m+1)^4} 
\stackrel{\eqref{eq:propK1}}{<} p_j + \frac{c_j}{(K^2 m+1)^j}, \]
which contradicts the choice of $q$, namely \eqref{eq:choiceq}.

Similarly, suppose that there exists the largest integer $L\geq K$ such that $\epsilon_{L,j}=1$.
This means
\[ x_{L,j} + \frac{3c_j}{(L^2 m+1)^j} \leq q_j \]
and
\[ x_{k,j} + \frac{3c_j}{(k^2 m+1)^j} > q_j \quad\text{for } k\geq L+1, \]
implying
\begin{equation}\label{eq:aux11}
x_{N,j} - x_{L,j} \geq \frac{3c_j}{(L^2 m+1)^j} - \frac{3c_j}{(N^2 m+1)^j} 
\end{equation}
for every $N>L$.
As opposed to that, the recurrence relation gives
\[ |x_{L+1,j} - x_{L,j}| \stackrel{\eqref{eq:rec1}}{\leq} \frac{c_j}{(L^2 m+1)^j} + \frac{3C}{(L^2 m+1)^4} \]
and
\[ |x_{k+1,j} - x_{k,j}| \stackrel{\eqref{eq:rec2}}{\leq} \frac{3C}{(k^2 m+1)^4} \quad\text{for } k\geq L+1, \]
which yields
\begin{equation}\label{eq:aux12}
|x_{N,j} - x_{L,j}| \leq \frac{c_j}{(L^2 m+1)^j} + \sum_{k=L}^{N-1} \frac{3C}{(k^2 m+1)^4}
\stackrel{\eqref{eq:propK1}}{\leq} \frac{2c_j}{(L^2 m+1)^j}.
\end{equation}
Note that \eqref{eq:aux11} and \eqref{eq:aux12} together give
\[ \frac{c_j}{(L^2 m+1)^j} \leq \frac{3c_j}{(N^2 m+1)^j}. \]
We arrive at a clear contradiction in the limit as $N\to\infty$, so the claim follows.
\end{proof}

\begin{claim}\label{cl:claim2}
For each $1\leq j\leq3$ there exist infinitely many $k\geq K$ such that $\epsilon_{k,j}=0$.
\end{claim}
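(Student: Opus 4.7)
The plan is to prove Claim \ref{cl:claim2} in complete parallel to Claim \ref{cl:claim1}, swapping the roles of the values $0$ and $1$ of the coefficients $\epsilon_{k,j}$. Suppose, for the sake of contradiction, that for some $1\leq j\leq 3$ only finitely many $k\geq K$ satisfy $\epsilon_{k,j}=0$. Then exactly one of the following two cases occurs: either (Case 1) $\epsilon_{k,j}=1$ for every $k\geq K$, or (Case 2) there is a largest $L\geq K$ with $\epsilon_{L,j}=0$, so that $\epsilon_{k,j}=1$ for all $k\geq L+1$.

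For Case 1, we would combine the lower bound $x_{N,j} \geq p_j + \sum_{k=K}^{N-1}\bigl(c_j/(k^2m+1)^j-3C/(k^2m+1)^4\bigr)$ coming from \eqref{eq:rec1} with the upper bound $x_{N,j}\leq q_j-3c_j/(N^2m+1)^j$ enforced by $\epsilon_{N,j}=1$. Passing to $N\to\infty$ and applying \eqref{eq:propK1} and \eqref{eq:propK2} at $k=K$ on the two tails should give
\[ q_j > p_j + \frac{4c_j}{(K^2 m+1)^j}, \]
in direct contradiction with the upper bound $q_j-p_j\leq 2c_j/(K^2m+1)^j$ from the membership $q\in\mathcal{Q}$ in \eqref{eq:choiceq}. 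This mirrors the opening step in the proof of Claim \ref{cl:claim1}.

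For Case 2, the hypothesis $\epsilon_{L,j}=0$ supplies $x_{L,j} > q_j-3c_j/(L^2m+1)^j$ together with the short step $|x_{L+1,j}-x_{L,j}|\leq 3C/(L^2m+1)^4$ via \eqref{eq:rec2}, while $\epsilon_{k,j}=1$ for all $k\geq L+1$ provides both the accumulation $x_{N,j}-x_{L+1,j}\geq \sum_{k=L+1}^{N-1}\bigl(c_j/(k^2m+1)^j-3C/(k^2m+1)^4\bigr)$ and the upper bound $x_{N,j}\leq q_j-3c_j/(N^2m+1)^j$. Assembling these, telescoping the growth against the gap $(q_j-x_{L,j})-3c_j/(N^2m+1)^j$, and letting $N\to\infty$ should rearrange to
\[ \sum_{k=L+1}^{\infty}\frac{c_j}{(k^2 m+1)^j} \;\leq\; \frac{3c_j}{(L^2 m+1)^j} + \sum_{k=L}^{\infty}\frac{3C}{(k^2 m+1)^4}. \]
Then \eqref{eq:propK1} at $k=L$ bounds the right-hand side strictly by $4c_j/(L^2m+1)^j$, directly contradicting \eqref{eq:propK2} at $k=L$. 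This is the symmetric dual of Case 2 in Claim \ref{cl:claim1}.

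No genuinely new obstacle is expected beyond what already appears in Claim \ref{cl:claim1}; the most delicate point is simply to check that the constants line up. The slack factor $3$ in the threshold $3c_j/(k^2m+1)^j$ used to define $\epsilon_{k,j}$, the factor $1$ on the right of \eqref{eq:propK1}, and the factor $4$ on the right of \eqref{eq:propK2} satisfy $3+1=4$ with strict inequality from \eqref{eq:propK1}, making the telescoping cancellation exact and yielding the contradiction in Case 2. Had Alice chosen a threshold factor smaller than $3$, this delicate balance would fail.
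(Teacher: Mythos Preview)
Your proof is correct, and your Case~2 is essentially identical to the paper's argument. The one difference is in Case~1: rather than deriving a contradiction via the tail estimates \eqref{eq:propK1} and \eqref{eq:propK2}, the paper simply observes that $\epsilon_{K,j}=0$ holds directly, because $x_{K,j}=p_j$ and $q_j-p_j\leq 2c_j/(K^2m+1)^j<3c_j/(K^2m+1)^j$ by \eqref{eq:choiceq}. This one-line check immediately rules out Case~1 and guarantees a largest $L\geq K$ with $\epsilon_{L,j}=0$, so the paper only needs your Case~2. Your longer route still works, but the shortcut is worth knowing: it explains why the upper endpoint $2c_j/(K^2m+1)^j$ in \eqref{eq:choiceq} was chosen strictly below the threshold factor~$3$.
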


\begin{proof}[Proof of Claim \ref{cl:claim2}]
We certainly have $\epsilon_{K,j}=0$ because
\[ x_{K,j} + \frac{3c_j}{(K^2 m+1)^j} 
= p_j + \frac{3c_j}{(K^2 m+1)^j} > q_j \]
by \eqref{eq:choiceq}.
Suppose that the claim fails and that $L\geq K$ is the largest integer such that $\epsilon_{L,j}=0$.
We can read this as
\[ x_{L,j} + \frac{3c_j}{(L^2 m+1)^j} > q_j \]
and
\[ x_{k,j} + \frac{3c_j}{(k^2 m+1)^j} \leq q_j \quad\text{for } k\geq L+1. \]
Thus,
\begin{equation}\label{eq:aux21}
x_{N,j} - x_{L,j} \leq \frac{3c_j}{(L^2 m+1)^j}
\end{equation}
for every $N>L$.
The sequence $(x_k)$ has been defined in a way that
\[ x_{L+1,j} \stackrel{\eqref{eq:rec2}}{\geq} x_{L,j} - \frac{3C}{(L^2 m+1)^4} \]
and
\[ x_{k+1,j} \stackrel{\eqref{eq:rec1}}{\geq} x_{k,j} + \frac{c_j}{(k^2 m+1)^j} - \frac{3C}{(k^2 m+1)^4} \quad\text{for } k\geq L+1, \]
implying
\begin{align} 
x_{N,j} - x_{L,j} & \geq \sum_{k=L+1}^{N-1} \frac{c_j}{(k^2 m+1)^j} - \sum_{k=L}^{N-1} \frac{3C}{(k^2 m+1)^4} \nonumber \\
& \stackrel{\eqref{eq:propK1}}{\geq} \sum_{k=L+1}^{N-1} \frac{c_j}{(k^2 m+1)^j} - \frac{c_j}{(L^2 m+1)^j}. \label{eq:aux22}
\end{align}
Combining \eqref{eq:aux21} and \eqref{eq:aux22} we get
\[ \sum_{l=L+1}^{N-1} \frac{c_j}{(l^2 m+1)^j} \leq \frac{4c_j}{(L^2 m+1)^j}, \]
so letting $N\to\infty$ we obtain an estimate that contradicts \eqref{eq:propK2}.
\end{proof}

Now we can return to the proof of Theorem \ref{thm:main}.
From $|x_{k+1}-x_k| = O(1/k^2)$ it follows that $(x_k)$ is a Cauchy sequence in $\R^3$, so it converges.
By Claim \ref{cl:claim1} there exist infinitely many indices $k(j,1)<k(j,2)<\cdots$ such that
\[ x_{k(j,t),j} + \frac{3c_j}{(k(j,t)^2 m+1)^j} \leq q_j, \]
which gives
\[ \lim_{k\to\infty} x_{k,j} = \lim_{t\to\infty} x_{k(j,t),j} \leq q_j. \]
From Claim \ref{cl:claim2} we have infinitely many indices $k'(j,1)<k'(j,2)<\cdots$ such that
\[ x_{k'(j,t),j} + \frac{3c_j}{(k'(j,t)^2 m+1)^j} > q_j, \]
which implies
\[ \lim_{k\to\infty} x_{k,j} = \lim_{t\to\infty} x_{k'(j,t),j} \geq q_j. \]
It follows that $(x_k)$ converges precisely to the point $q$.
Alice wins for every $q$ from the rectangular box \eqref{eq:choiceq}.

By the definition of the sequence $(x_k)$ we have
\[ \lim_{k\to\infty} M^{-1} x_k 
= \sum_{\substack{k\geq K\\ 1\leq j\leq 3\\ \epsilon_{k,j}=1}} \sum_{a\in S_j} \begin{pmatrix} \frac{1}{a(k^2 m+1)} \\[1mm] \frac{1}{a(k^2 m+1)+1} \\[1mm] \frac{1}{a(k^2 m+1)+2} \end{pmatrix}
+ \sum_{\substack{k\geq K\\ 1\leq j\leq 3\\ \epsilon_{k,j}=0}} \sum_{a\in T_j} \begin{pmatrix} \frac{1}{a(k^2 m+1)} \\[1mm] \frac{1}{a(k^2 m+1)+1} \\[1mm] \frac{1}{a(k^2 m+1)+2} \end{pmatrix}, \]
compare with the discussion of game \#6.
Therefore, for every $q$ as in \eqref{eq:choiceq}, the point $M^{-1}q$ is of the desired form \eqref{eq:desiredform} for the set
\[ A := \bigcup_{\substack{k\geq K\\ 1\leq j\leq 3\\ \epsilon_{k,j}=1}} \{ a(k^2 m+1) : a\in S_j \}
\cup \bigcup_{\substack{k\geq K\\ 1\leq j\leq 3\\ \epsilon_{k,j}=0}} \{ a(k^2 m+1) : a\in T_j \}. \]
We are done since $M^{-1}\mathcal{Q}$ is a non-degenerate parallelepiped, so it has a non-empty interior.
The proof of Theorem \ref{thm:main} is complete.


\section{A concrete open set}
\label{sec:concrete}
Since the set of points \eqref{eq:theset} defined in Theorem \ref{thm:main} is a concrete subset of $\R^3$, we should be able to find an explicit open ball inside it. Luckily, our proof is constructive and we can compute the explicit values easily using Mathematica \cite{Mathematica}.

Some sequence monotonicity observations give that the constant $C$ only needs to dominate in the absolute value the corresponding limits as $n\to\infty$ of the coordinates of \eqref{eq:vecexpdiff} multiplied by $n^4$. This gives the optimal choice:
\[ C = \frac{8833}{100776960000} = 8.7649\ldots \cdot 10^{-8}. \]
Next, we need to find $K$ such that \eqref{eq:propK1} and \eqref{eq:propK2} hold for $k\geq K$ and $1\leq j\leq 3$.
By brute force verification for $k\leq 25$ and using
\[ \sum_{l=k}^{\infty} \frac{1}{(l^2 m+1)^4} < \int_{k-1}^{\infty} \frac{\textup{d}x}{(x^2 m+1)^4}, \quad
\sum_{l=k+1}^{\infty} \frac{1}{(l^2 m+1)^j} > \int_{k+1}^{\infty} \frac{\textup{d}x}{(x^2 m+1)^j} \]
for $k>25$, we see that we can take $K=14$.

All this is sufficient to compute (up to a desired precision) the point $p$ and the rectangular box appearing in the formula \eqref{eq:choiceq}.
Taking the smallest ball inside the box, transforming it by $M^{-1}$ into an ellipsoid, and estimating the singular values of $M^{-1}$ to find the smallest axis of that ellipsoid, we conclude that \eqref{eq:theset} contains a ball of radius $10^{-24}$ around the point
\[ \begin{pmatrix} 
2.58842922071730660744793282484 \\ 
2.58842919367011667177209233699 \\ 
2.58842916662292797961469594496
\end{pmatrix} \cdot 10^{-6}. \]


\section*{Acknowledgments}
This work was supported in part by the Croatian Science Foundation under the project HRZZ-IP-2022-10-5116 (FANAP).
The author is grateful to Thomas Bloom for a suggestion to find an explicit non-empty open set in the interior.


\bibliography{harmonicsubseries}
\bibliographystyle{plainurl}

\end{document}